\documentclass[12pt]{amsart}



\usepackage{amssymb}

\usepackage{enumitem}

\usepackage{graphicx}

\makeatletter
\@namedef{subjclassname@2020}{%
  \textup{2020} Mathematics Subject Classification}
\makeatother

\usepackage[T1]{fontenc}


\newtheorem{theorem}{Theorem}[section]

\newtheorem{lemma}[theorem]{Lemma}



\theoremstyle{definition}
\newtheorem{definition}[theorem]{Definition}



\numberwithin{equation}{section}


\frenchspacing

\textwidth=13.5cm
\textheight=23cm
\parindent=16pt
\oddsidemargin=-0.5cm
\evensidemargin=-0.5cm
\topmargin=-0.5cm
\linespread{1.2}





\usepackage{graphicx} 

\usepackage{wrapfig}
\usepackage{amssymb}
\usepackage{dirtytalk}
\usepackage{float}
\usepackage[bb=dsserif]{mathalpha}
\usepackage{mathtools}
\usepackage{amsthm}
\usepackage[utf8]{inputenc}
\usepackage[T1]{fontenc}
\usepackage{lmodern}
\usepackage{amsfonts}
\usepackage{hyperref}
\usepackage{lipsum} 
\usepackage{geometry}
\usepackage{dcolumn}
\usepackage{amsmath}
\usepackage[nobysame]{amsrefs}
\newcommand\restr[2]{{
  \left.\kern-\nulldelimiterspace 
  #1 
  \vphantom{\big|} 
  \right|_{#2} 
  }}

\theoremstyle{definition}

\theoremstyle{remark}

\numberwithin{equation}{section}

\begin{document}

 \title[Sampling on infinite-dimensional PW spaces on graphs]{Sampling on Paley--Wiener spaces on graphs, with particular focus on the infinite-dimensional case}


\author[F.Giannoni]{Filippo Giannoni}
\address{Department of Mathematics, University of Sussex \\ Brighton, BN1 9QH \\ United Kingdom}
\email{F.Giannoni@sussex.ac.uk}










\subjclass[2020]{Primary: 43A85; 05C99; 94A20. Secondary: 94A12}

\begin{abstract}
We prove a sampling theorem for infinite-dimensional Paley–Wiener spaces on graphs which allows for stable frame reconstruction. We provide a generalization of the results presented in \cite{PW1}, where a frame reconstruction sampling theorem for finite graphs is presented. We prove that all sampling sets for the Paley--Wiener space $PW_\omega(G)$ are complements of $\lambda$-sets, thereby providing a sufficient condition for stable sampling and reconstruction on graphs such as $\mathbb{Z}^n$-lattices and trees of bounded geometry.
\end{abstract}
\maketitle
\section{Introduction}
Sampling via frame reconstruction on Paley--Wiener spaces on graphs was first introduced by I. Pesenson in \cite{PW1}. This paper has the valuable feature of taking into account very few assumptions on the graph's structure, thus making the results valid for a variety of graphs of analytical interest, like $\mathbb{Z}^n$-lattices and homogeneous trees (see e.g. \cite{figa}). However, the reconstruction theorem proposed in \cite{PW1} was later revised in the Erratum \cite{PW2}, adding the crucial assumption of finite-dimensional Paley--Wiener spaces. This restriction causes a drawback on the relevance of the results, since the aforementioned graphs all have infinite-dimensional Paley--Wiener spaces. Our aim in this paper is to generalise the results presented in \cite{PW1} by finding a frame reconstruction theorem which holds for infinite-dimensional Paley--Wiener spaces and allows for stable reconstruction. We will rely on the definitions of uniqueness set (\ref{def uset}) and $\lambda$-set (\ref{def lset}). In particular, we will show that the complement of a $\lambda$-set is also a viable sampling set for certain Paley--Wiener spaces (Theorem \ref{belteorema}). We will also show that every sampling set is the complement of a $\lambda$-set (Theorem \ref{iff}) and we will give an estimate of the constant $\lambda$.

Our study is not the first to address the infinite-dimensional frame reconstruction sampling problem since the publication of \cite{PW2}; see, for example, \cite{FG}, \cite{FP}, \cite{P}, \cite{PP1}, and \cite{PP2}. Nevertheless, our results are distinct from those in the cited literature because we focus primarily on the interplay between $\mathbb{\lambda}$-sets and sampling sets. Other publications either provide an algorithm for constructing a sampling set and subsequently derive that its complement is a $\lambda$-set (\cite{FG}, \cite{FP}, \cite{PP1}), or they establish less sharp results in more restricted settings (\cite{P}, where connectedness is assumed and part 2 of Theorem 1.1 presents a less precise constant for graphs where the Laplacian operator is not invertible, such as finite graphs and $\mathbb{Z}^n$-lattices). We will discuss this topic in detail by comparing our results to those in \cite{FP}, and for the other cited works, similar arguments can be demonstrated. 

The secondary goal of this work is to provide a brief note on this topic, as the original results of \cite{PW1} are still being cited in the infinite-dimensional setting without acknowledgment of the erratum (see, e.g., \cite{MM}, where Section 5 contains a comparison between the sampling set found in that paper and one provided by an example from \cite{PW1}. Since those results are presented for the square lattice, they should not be taken into account, as their validity had remained unproven until now).

The paper is organized as follows: Section 2.1 is a brief overview of the known sampling results in the continuous case, Section 2.2 contains the main points of Pesenson's work, along with comments on why Theorem \ref{teoPes} is problematic. Section 3 provides the sampling results for finite and infinite-dimensional Paley--Wiener spaces we are looking for (Theorems \ref{belteorema} and \ref{iff}). Section 4 contains a thorough discussion on \cite{FP}. First, we will highlight the difference in the methodology (and thus in the results) with respect to our work. Then, we will provide an example of a sampling set which is valid in both cases, and show how our work leads to a sharper constraint, which allows for more Paley--Wiener spaces to be sampled on the same set. Finally, section 5 will be devoted to finding notable sampling examples on homogeneous trees.

\section{Overview and motivations}
\subsection{Results in the continuous case}
Sampling theory on the real line $\mathbb{R}$ has been developed over the last century and it relies heavily on Paley--Wiener spaces, which are defined through the Fourier transform on $L^2(\mathbb{R})$. Recall that the Fourier transform $\mathcal{F}$ of a function $f \in L^1(\mathbb{R)}$ is
$$
\mathcal{F}f(\xi) : = \int_{\mathbb{R}}f(x)e^{-2\pi ix\xi}dx, \quad \xi \in \mathbb{R}
$$
and it extends to a unitary operator $\mathcal{F}: L^2(\mathbb{R)} \longrightarrow L^2(\mathbb{R})$. We say that a function $f$ in $L^2(\mathbb{R)}$ belongs to the Paley--Wiener space $PW_\omega(\mathbb{R})$ if the essential support of its Fourier transform is a subset of the interval $[-\omega,\omega]$.

Thanks to the Paley--Wiener theorem, we know that each $L^2$-class of function in $PW_\omega(\mathbb{R})$ has a class representative which can be extended to an entire function of exponential type $2\pi\omega$ (i.e., to an entire function $F$ satisfying $|F(z)| \leq \mathrm{A}e^{2\pi\omega|\mathfrak{Im}z|} $, for all $z \in \mathbb{C}$ and for some positive constant $A$).

Paley--Wiener spaces are essential in sampling theory because of Shannon's theorem.
\begin{theorem}
    \emph{(Shannon).} If $f \in PW_\omega(\mathbb{R})$, then
    \begin{equation}
       f = \sum_{j \in \mathbb{Z}}f\left(\frac{j}{2\omega}\right)\mathrm{sinc}\left(2\pi\omega\left(\cdot-\frac{j}{2\omega}\right)\right),
    \end{equation}
    where convergence is understood in the $L^2$-norm. Moreover, if we consider the representative $F$ of $f$ which can be extended to an entire function, we have
    \begin{equation}
    F(\xi) = \sum_{j \in \mathbb{Z}}F\left(\frac{j}{2\omega}\right)\mathrm{sinc}\left(2\pi\omega\left(\xi-\frac{j}{2\omega}\right)\right), \quad \xi \in \mathbb{R}
    \end{equation}
    and the series converges uniformly.
    \end{theorem}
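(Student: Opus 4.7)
The natural strategy is to translate the sampling problem into a Fourier series expansion on $[-\omega,\omega]$. Since $f\in PW_\omega(\mathbb{R})$, its Fourier transform $\hat f$ is supported in $[-\omega,\omega]$, so I would view $\hat f$ as an element of $L^2([-\omega,\omega])$ and expand it in the orthonormal basis $\{(2\omega)^{-1/2}e^{2\pi i j\xi/(2\omega)}\}_{j\in\mathbb{Z}}$. The Fourier coefficients are
\begin{equation*}
c_j \;=\; \frac{1}{2\omega}\int_{-\omega}^{\omega}\hat f(\xi)\,e^{2\pi i j\xi/(2\omega)}\,d\xi,
\end{equation*}
and, because $\hat f$ vanishes outside $[-\omega,\omega]$, the Fourier inversion formula identifies this integral with $(2\omega)^{-1}f(j/(2\omega))$. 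This is the step that produces the samples as the data of the expansion.

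Next, I would substitute back into the Fourier series:
\begin{equation*}
\hat f(\xi) \;=\; \frac{1}{2\omega}\sum_{j\in\mathbb{Z}} f\!\left(\tfrac{j}{2\omega}\right) e^{-2\pi i j\xi/(2\omega)}\,\mathbf{1}_{[-\omega,\omega]}(\xi),
\end{equation*}
where convergence is in $L^2(\mathbb{R})$. Applying the inverse Fourier transform term by term and using that $\mathcal F^{-1}\bigl(e^{-2\pi i j\xi/(2\omega)}\mathbf{1}_{[-\omega,\omega]}\bigr)(x) = 2\omega\,\mathrm{sinc}(2\pi\omega(x-j/(2\omega)))$ yields the first identity in the statement. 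Because $\mathcal F^{-1}$ is an isometry on $L^2$, the series converges in the $L^2$-norm, which is the form of convergence claimed.

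For the entire class representative $F$, I would use the Paley–Wiener theorem to identify $F$ as a function of exponential type $2\pi\omega$ and observe that the sample sequence $\{F(j/(2\omega))\}$ is square-summable by Parseval applied to $\hat f$. Then the sinc series, being a sum of entire functions of the same exponential type with $\ell^2$ coefficients, converges uniformly on $\mathbb{R}$ by a standard Cauchy–Schwarz estimate against $\sum_j|\mathrm{sinc}(2\pi\omega(\xi-j/(2\omega)))|^2$, which is bounded uniformly in $\xi$. Since both sides belong to the same $L^2$ equivalence class and both are continuous (indeed entire), they must agree pointwise, giving the second identity.

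The main obstacle is the justification of passing the inverse Fourier transform through the infinite sum and the promotion of $L^2$-convergence to uniform convergence for the entire representative; both rest on the exponential-type structure of $PW_\omega(\mathbb{R})$ and on the Bessel-type bound for the sinc translates, so in practice the argument reduces to careful bookkeeping rather than a genuinely delicate estimate.
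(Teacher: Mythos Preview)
Your argument is the standard, correct proof of Shannon's theorem: expand $\hat f$ as a Fourier series on $[-\omega,\omega]$, identify the Fourier coefficients with the samples via Fourier inversion (which is legitimate since $\hat f\in L^2([-\omega,\omega])\subset L^1([-\omega,\omega])$), and push the expansion back through $\mathcal F^{-1}$. The uniform-convergence step is also fine, since indeed $\sum_{j}|\mathrm{sinc}(2\pi\omega(\xi-j/(2\omega)))|^2=1$ for every $\xi$, so Cauchy--Schwarz controls the tails uniformly.

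There is, however, nothing to compare against: the paper does not prove this theorem. Shannon's theorem appears in Section~2.1 purely as classical background, stated without proof, to motivate the graph-theoretic sampling results that follow. So your proposal is not an alternative to the paper's argument; it simply supplies a proof where the paper gives none.
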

Our goal is to provide an analogous result for a class of graphs with at most a countable set of vertices, like trees of bounded geometry and $\mathbb{Z}^n$-lattices. This result will be a generalization of the reconstruction theorem found in \cite{PW2} for finite graphs.

\subsection{Pesenson's approach in \cite{PW1} and main results}
We will now recall the main definitions and results from \cite{PW1} that are going to be used in our work, along with remarks on why its main reconstruction Theorem (3.1) fails for infinite-dimensional Paley--Wiener spaces.

We identify a graph $G$ with the couple $(V,E)$, where $V$ is the set of its vertices and $E$ is the set of its edges. We only take into account graphs $G$ for which the following assumptions hold:
\begin{enumerate}
    \item $G$ has no isolated vertices.
    \item $V$ is (at most) countable.
    \item $G$ has no loops (i.e. there are no vertices connected to themselves by an edge - notice that, in this setting, closed chains are acceptable).
    \item Multi-edges are not allowed (i.e. two vertices are connected at most by a single edge).
    \item Edges are uniformly weighted and undirected.
    \item Given $v \in V$, we denote by $d(v)$ the number of vertices connected to $v$ by an edge. We call this quantity the degree of $v$. We require the degree of the graph $d(G) := \sup_{v \in V}d(v)$ to be finite.
\end{enumerate}

In order to develop our sampling theory, we consider the space of square-summable functions $L^2(G)$ defined as follows:
$$
L^2(G) : = \left\{f: V \longrightarrow \mathbb{C} \, \middle| \sum_{v \in V}|f(v)|^2 < \infty\right\}.
$$
This space is endowed with the usual inner product:
$$
\left<f,g\right> = \sum_{v \in V}f(v)\overline{g(v)}.
$$
It follows immediately from this definition that we are considering the counting measure on the set of vertices $V$.

To introduce Paley--Wiener spaces on graphs, we need a notion of Fourier transform. Under our mild assumptions on $G$, no closed-form expression for a Fourier-type transform on $L^2(G)$ is available. However, we can overcome this issue by considering the properties of the discrete Laplacian operator $\mathcal{L}: L^2(G) \longrightarrow L^2(G)$. This operator is given by
\begin{equation}
\label{disclap}
  \mathcal{L} f(v)=\frac{1}{\sqrt{d(v)}} \sum_{v \sim u}\left(\frac{f(v)}{\sqrt{d(v)}}-\frac{f(u)}{\sqrt{d(u)}}\right), \quad f \in L^2(G), v\in V,  
\end{equation}
where $u \sim v$ means that $u$ is connected to $v$ by an edge. This definition was introduced in \cite{Chung}. Notice that the first assumption on the graph's structure, \say{G has no isolated nodes}, is fundamental to ensure that $d(v) \neq 0$ for each $v$ in $V$.

It can be shown that $\mathcal{L}$ is linear, bounded, positive and self-adjoint. Thus we can exploit the spectral theorem for bounded, linear, self-adjoint operators which is recorded below in the required form (see \cite{Hall} or \cite{Folland} for reference).
\begin{theorem}
\label{teo spett}
Let $\mathbf{H}$ be a Hilbert space. If $T: \mathbf{H} \longrightarrow \mathbf{H}$ is a bounded self-adjoint operator, then there exist a $\sigma$-finite measure $\mu$ on the spectrum $\sigma(T)$ of $T$, a direct integral 
\begin{equation}
 \mathbb{H}^{\oplus} := \int_{\sigma(A)}^{\oplus} \mathbf{H}_\tau d \mu(\tau)   
\end{equation}
and a unitary operator $U:\mathbf{H}\longrightarrow\mathbb{H}^{\oplus}$ such that
\begin{equation}
  [UTU^{-1}(s)](\tau) = \tau s(\tau), \quad \forall \tau \in \sigma(T), \forall s \in \mathbb{H}^{\oplus}.  
\end{equation}
\end{theorem}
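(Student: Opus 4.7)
The plan is to build the unitary $U$ through the continuous functional calculus for $T$ combined with a decomposition of $\mathbf{H}$ into cyclic $T$-invariant subspaces.

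First, I would construct the continuous functional calculus $\Phi: C(\sigma(T)) \to B(\mathbf{H})$. Starting from polynomials with $\Phi(p) = p(T)$, self-adjointness of $T$ together with the spectral radius formula yield $\|p(T)\| = \sup_{\tau \in \sigma(T)}|p(\tau)|$, so by the Stone-Weierstrass theorem $\Phi$ extends uniquely to an isometric $*$-homomorphism on $C(\sigma(T))$. For each $v \in \mathbf{H}$, the functional $f \mapsto \langle \Phi(f)v, v\rangle$ is positive and linear on $C(\sigma(T))$, so the Riesz-Markov theorem produces a finite Borel measure $\mu_v$ on $\sigma(T)$ with $\langle \Phi(f)v, v\rangle = \int f\, d\mu_v$. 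If $v$ is cyclic for $T$, meaning $\{p(T)v : p \text{ polynomial}\}$ is dense in $\mathbf{H}$, the assignment $p(T)v \mapsto p$ extends by continuity to a unitary $U_v: \mathbf{H} \to L^2(\sigma(T), \mu_v)$ intertwining $T$ with multiplication by $\tau$, which is exactly the desired conclusion in the cyclic case.

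For a general $\mathbf{H}$, Zorn's lemma provides an orthogonal decomposition $\mathbf{H} = \bigoplus_{i \in I} \mathbf{H}_i$ into cyclic $T$-invariant subspaces; for the target application to $L^2(G)$ the space is separable, so $I$ is (at most) countable. Each $\mathbf{H}_i$ yields a finite measure $\mu_i$ on $\sigma(T)$ by the previous step. Set $\mu := \sum_i 2^{-i}(1+\mu_i(\sigma(T)))^{-1}\mu_i$, a finite and hence $\sigma$-finite measure dominating every $\mu_i$, and let $g_i := d\mu_i/d\mu$ be the Radon-Nikodym derivatives. Define the fiber $\mathbf{H}_\tau$ as the Hilbert space with orthonormal basis $\{e_i : g_i(\tau) > 0\}$; then $\bigoplus_i L^2(\sigma(T), \mu_i)$ is unitarily equivalent to $\int_{\sigma(T)}^{\oplus} \mathbf{H}_\tau\, d\mu(\tau)$ via $(f_i)_i \mapsto \bigl(\tau \mapsto \sum_i f_i(\tau)\sqrt{g_i(\tau)}\,e_i\bigr)$, and under this equivalence the diagonal action of $T$ on the direct sum becomes pointwise multiplication by $\tau$ on each fiber. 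Composing with the cyclic unitaries $U_{v_i}$ gives the required $U$.

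The main obstacle is the measurability bookkeeping for the direct integral: one must verify that the fibers $\mathbf{H}_\tau$ assemble into a measurable field of Hilbert spaces and that the candidate section map is well-defined and unitary, not merely a formal $\mu$-a.e. recipe. A cleaner alternative, once $\Phi$ is extended to the bounded Borel functional calculus and yields a projection-valued spectral measure, is to invoke the Hahn-Hellinger multiplicity theorem, which produces the direct-integral decomposition systematically, with a multiplicity function encoding $\dim \mathbf{H}_\tau$; this bypasses the ad hoc choice of cyclic vectors and makes the measurable structure transparent.
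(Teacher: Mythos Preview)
The paper does not prove this theorem at all: it is quoted as a known result, with the reader referred to \cite{Hall} and \cite{Folland}. There is therefore no paper proof to compare your proposal against.

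That said, your outline is the standard cyclic-vector route to the direct-integral form of the spectral theorem and is correct in its essentials. The continuous functional calculus via Stone--Weierstrass, the Riesz--Markov measures $\mu_v$, the Zorn decomposition into cyclic pieces, and the Radon--Nikodym trick to produce a single dominating measure are exactly how the references you would be reconstructing (e.g.\ Hall) proceed. Your self-identified obstacle---the measurable-field bookkeeping for the fibers $\mathbf{H}_\tau$---is genuine but not a gap in the argument, only a place where care is required; the Hahn--Hellinger alternative you mention is indeed the cleaner way to package it. For the purposes of this paper nothing beyond the statement is needed, so a citation suffices.
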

Theorem~\ref{teo spett} states the existence of a unitary operator $U$ which can serve the same purpose as the Fourier transform in the definition of Paley--Wiener spaces on $\mathbb{R}$. Thus, we just need to fix the operator $U$ and we are ready to give a definition of Paley--Wiener spaces on graphs.
\begin{definition}
\label{def PW}
(Paley--Wiener space). Given $\omega \geq 0$, we say that a function $f$ in $L^{2}(G)$ belongs to the Paley--Wiener space $P W_\omega(G)$ if $Uf$ has essential support in $[0, \omega]$, where $U$ is a fixed unitary operator given by Theorem \ref{teo spett}.
\end{definition}
The observation that naturally arises from this definition is that, since $\mathcal{L}$ is positive, the lower bound of its spectrum is $0$, so the support of $Uf$ is not symmetric with respect to the origin as in the continuous setting. Moreover, since $\mathcal{L}$ is bounded, every function in $L^{2}(G)$ is \textit{bandlimited}. Thus the following relations hold:
\begin{equation}
\begin{split}
      &L^2(G)=P W_{\omega_{\mathrm{max}}}(G)=\bigcup_{\omega \in \sigma(\mathcal{L})} P W_\omega(G), \\
      &P W_{\omega_1}(G) \subseteq P W_{\omega_2}(G), \quad \omega_1<\omega_2, 
\end{split}
\end{equation}
where $\omega_{\mathrm{max}} : = \sup_{\omega \in \sigma(\mathcal{L})}(w)$. This is different from the continuous case because the whole $L^2(G)$ space can be seen as a Paley--Wiener space. This might lead us to think that it is possible to perform sampling on the whole $L^2(G)$. However, as we will point out later in this section, this is not the case.

The results in \cite{PW1} are obtained by means of frame theory, hence we now give a brief overview of this matter (for a more detailed discussion, see e.g. \cite{KG} or \cite{DS}).

A sequence $(e_j )_{j \in J}$ in a Hilbert space $\mathcal{H}$ is called a frame if there exist two constants $A,B>0$ such that for all $f \in \mathcal{H}$ we have
$$
A\|f\|^2_{\mathcal{H}}\leq\sum_{j\in J}|\left<f, e_j\right>|^2\leq B\|f\|^2_{\mathcal{H}}.
$$

The frame operator F is 
$$
Ff = \sum_{j\in J}\left<f, e_j\right>e_j, \quad f\in \mathcal{H}.
$$
It can be shown that the frame operator $F$ is invertible and that for all $f \in \mathcal{H}$ the following equality holds:
\begin{equation}
\label{feq}
   f = \sum_{j \in J}\left<f, e_j\right>F^{-1}e_j, 
\end{equation}
where the sequence $(F^{-1}e_j)_{j \in J}$ is referred to as the \textit{dual frame} of $(e_j)_{j \in J}$. This relation is at the core of our work, as we will now show.

The key point presented in \cite{PW1} is to exploit frame theory by finding a proper subset $W \subset V$ such that the following frame inequalities hold:
\begin{equation}
A\|f\|_2^2 \leq \sum_{v \in W}\left|\left\langle f, \theta_v\right\rangle\right|^2 \leq B\|f\|_2^2, \quad f\in PW_\omega(G),  
\end{equation}
where $\theta_v$ is the orthogonal projection on $PW_\omega(G)$ of the Dirac delta $\delta_v$. This would entail the existence of a dual frame $(\Theta_v)_{v\in W}$ such that, by equation \eqref{feq} for all $f$ in $PW_\omega(G)$:
$$
f(u) = \sum_{v \in W}\langle f,\theta_v\rangle\Theta_v(u) = \sum_{v \in W}f(v)\Theta_v(u),  \quad u \in V,
$$
thus recovering the results of Shannon's theorem, by which we can reconstruct a function from its values on a suitable subset of its domain. It is worth pointing out that in this case convergence is not uniform but unconditional, which means that for all $\epsilon >0$ there exists a finite subset $M_0(\epsilon) \subseteq W$ such that
$$
\left\|f - \sum_{v \in M}f(v)\Theta_v\right\|_2 < \epsilon
$$
for all finite subsets $M \supseteq M_0(\epsilon)$.

In order to find the desired frame inequalities, \cite{PW1} introduces the following crucial definitions of uniqueness and $\lambda$-sets:
\begin{definition}
\label{def uset}
(Uniqueness set). Let $\omega \in \sigma(\mathcal{L)}$. We will say that a subset $W\subseteq V$ is a uniqueness set for the space $PW_\omega(G)$ if for all $f,g \in PW_\omega(G)$, $\restr{f}{W}\equiv\restr{g}{W}$ implies that $ f\equiv g$.
\end{definition}
\begin{definition}
\label{def lset}
($\lambda$-set). We say that $S \subseteq V$ is a $\lambda$-set for $G$ if for all $\varphi \in L^2(S)$ the following Poincaré inequality holds: 
    \begin{equation}
    \label{lambdaset}
           \|\varphi\|_{L^2(S)}\leq \lambda \|\mathcal{L}\varphi\|_{2}.  
    \end{equation}
\end{definition}
The need for the notion of uniqueness set is intuitive: every sampling set must also be a uniqueness set. This definition also provides the reason why sampling on the whole $L^2(G)$ is not possible: suppose that there exists a proper subset $W \subset V$ that is a uniqueness set for $L^2(G)$. This means that there exists a vertex $v \in V$ that does not belong to $W$. This implies that the set $V\setminus\{v\}$ is also a uniqueness set for $L^2(G)$. Now, we take $f \in L^2(G)$ and we define $g$ such that $\restr{g}{V\setminus\{v\}}\equiv \restr{f}{V\setminus\{v\}}$ and $g(v) = f(v) +1$. This is a contradiction to our definition of uniqueness set, thus the whole $L^2(G)$ cannot be sampled.

It is perhaps less immediately clear why we also need the definition of $\lambda$-set: this is shown in the following theorem, proved in \cite{PW1}.
\begin{theorem}
\label{teoremerda}
    \emph{(\cite{PW1}, Theorem 3.2)}. If $S\subset V$ is a $\lambda$-set, then $W:=V\backslash S$ is a uniqueness set for all $PW_\omega(G)$ spaces with $\omega\lambda<1$.
\end{theorem}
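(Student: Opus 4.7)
The plan is to reduce to a vanishing statement and then combine the two hypotheses in the obvious way: the $\lambda$-set inequality gives an upper bound on a function supported in $S$ in terms of $\|\mathcal{L}f\|_{2}$, while membership in $PW_\omega(G)$ gives a Bernstein-type reverse bound $\|\mathcal{L}f\|_{2}\leq\omega\|f\|_{2}$. Chaining them forces $f=0$ when $\omega\lambda<1$.

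More concretely, I would first observe that, by linearity of the definition of uniqueness set, it suffices to show that any $f\in PW_\omega(G)$ with $\restr{f}{W}\equiv 0$ must vanish identically. Since $W=V\setminus S$, any such $f$ is supported in $S$, so $f$ may legitimately be viewed as an element of $L^2(S)$ with $\|f\|_{L^2(S)}=\|f\|_{2}$. Hypothesis (\ref{lambdaset}) then yields
\begin{equation*}
\|f\|_{2} = \|f\|_{L^2(S)} \leq \lambda\,\|\mathcal{L}f\|_{2}.
\end{equation*}

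The second step is the Bernstein-type bound on $\mathcal{L}$ restricted to $PW_\omega(G)$. Using the unitary $U$ from Theorem~\ref{teo spett}, which conjugates $\mathcal{L}$ to multiplication by $\tau$ on the direct integral $\mathbb{H}^{\oplus}$, and the fact that for $f\in PW_\omega(G)$ the section $Uf$ is essentially supported in $[0,\omega]$, I would compute
\begin{equation*}
\|\mathcal{L}f\|_{2}^{2} \;=\; \int_{\sigma(\mathcal{L})}\tau^{2}\,\|Uf(\tau)\|_{\mathbf{H}_\tau}^{2}\,d\mu(\tau)
\;\leq\; \omega^{2}\!\int_{[0,\omega]}\|Uf(\tau)\|_{\mathbf{H}_\tau}^{2}\,d\mu(\tau)
\;=\;\omega^{2}\|f\|_{2}^{2},
\end{equation*}
using the unitarity of $U$ in the last equality.

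Combining the two inequalities gives $\|f\|_{2}\leq\omega\lambda\,\|f\|_{2}$; since $\omega\lambda<1$ this forces $\|f\|_{2}=0$, hence $f\equiv 0$. I do not expect any serious obstacle: the only point that requires a little care is justifying the Bernstein estimate via the spectral multiplier model, but this is immediate from Theorem~\ref{teo spett} once one notes that the essential support condition in Definition~\ref{def PW} truncates the integral to $[0,\omega]$. Everything else is formal manipulation of the two hypotheses.
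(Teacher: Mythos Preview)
Your argument is correct. Note, however, that the paper does not supply its own proof of this statement: it is quoted from \cite{PW1}, and the paper instead proves the strictly stronger Theorem~\ref{belteorema}, remarking afterwards that Theorem~\ref{teoremerda} follows. Your proof is exactly the specialization of the paper's proof of Theorem~\ref{belteorema} to the case $\restr{f}{W}\equiv 0$: the two ingredients you isolate---the Poincar\'e inequality (\ref{lambdaset}) for functions supported on $S$, and the Bernstein estimate $\|\mathcal{L}f\|_{2}\leq\omega\|f\|_{2}$ obtained via the spectral model---are precisely equations~(\ref{1}) and~(\ref{2}) in that proof, and when $\restr{f}{W}=0$ the extra term $\|\mathcal{L}(\restr{f}{W})\|_{2}$ and the operator-norm bound~(\ref{3}) simply drop out. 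So the approaches coincide.
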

This means that the strategy for finding uniqueness sets is by subtraction of $\lambda$-sets. Some results involving $\lambda$-sets shown in \cite{PW1} are collected in the following lemma.
\begin{lemma}
\label{lemma}
    For $S \subseteq V$, define $bS := \{v \in V\setminus S: v\sim u, u \in S\}$ and $\overline{S} := S \cup bS$. Then the following statements hold:
    \begin{enumerate}
        \item Every finite subset of $V$ is a $\lambda$-set.
        \item Let $S\subset V$ (finite or infinite) be such that for all $v \in S$ we have $\overline{\{v\}}\cap S = \{v\}$. Then $S$ is a $\lambda$-set with $\lambda = 1$.
        \item   Assume that $\{S_j\}_{j\in J}$ is a (finite or infinite) sequence of subsets $S_j \subset V$ such that the sets $\overline{S}_j$ are pairwise disjoint. If $S_j$ is a $\lambda_j$-set, then $S:=\bigcup_{j \in J}S_j$ is a $\lambda$-set, where $\lambda := \sup_{j\in J}\lambda_j$.
    \end{enumerate}
\end{lemma}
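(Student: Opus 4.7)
My plan in summary: the three statements have rather different flavors, so I would prove them separately. Part 1 is an abstract finite-dimensional argument, Part 2 is a direct pointwise computation, and Part 3 is a decomposition argument exploiting the disjointness of the $\overline{S}_j$.

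For Part 1, the idea is to exploit the finite dimensionality of $L^2(S)$. Viewing $\mathcal{L}$ as a linear operator $L^2(S)\to L^2(G)$ (functions extended by zero outside $S$), injectivity alone is enough, because on a finite-dimensional domain any injective linear map has a bounded left inverse, which directly supplies the Poincaré constant $\lambda$. For the injectivity I would rely on the standard symmetrization
$$\langle\mathcal{L}\varphi,\varphi\rangle=\tfrac{1}{2}\sum_{u\sim v}\bigl|g(v)-g(u)\bigr|^2,\qquad g(v):=\varphi(v)/\sqrt{d(v)},$$
so that $\mathcal{L}\varphi=0$ forces $g$ to be constant on the connected graph $G$, i.e.\ $\varphi(v)=c\sqrt{d(v)}$. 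Since $\varphi$ is supported on the finite set $S$ (and either $V\setminus S\neq\emptyset$, or $V$ is infinite so that $c\sqrt{d(\cdot)}\notin L^2$), one forces $c=0$, hence $\varphi=0$. Note this yields existence but no explicit estimate of $\lambda$.

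For Part 2, the hypothesis $\overline{\{v\}}\cap S=\{v\}$ says that no neighbor of any $v\in S$ lies in $S$. Consequently, for $\varphi\in L^2(S)$ and $v\in S$, every term $\varphi(u)$ with $u\sim v$ vanishes, and the definition of $\mathcal{L}$ collapses to
$$\mathcal{L}\varphi(v)=\frac{1}{\sqrt{d(v)}}\sum_{u\sim v}\frac{\varphi(v)}{\sqrt{d(v)}}=\varphi(v),\qquad v\in S.$$
Then $\|\mathcal{L}\varphi\|_2^2\geq\sum_{v\in S}|\mathcal{L}\varphi(v)|^2=\|\varphi\|_2^2$ gives the Poincaré inequality with $\lambda=1$.

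For Part 3, I would decompose $\varphi\in L^2(S)$ as $\varphi=\sum_j\varphi_j$ with $\varphi_j:=\varphi\cdot\mathbf{1}_{S_j}$, so that $\|\varphi\|_2^2=\sum_j\|\varphi_j\|_2^2$. The key observation is that $\mathcal{L}\varphi_j$ is supported in $\overline{S}_j$: for any $v\notin\overline{S}_j$, neither $v$ nor any of its neighbors belongs to $S_j$, so each term in the defining sum for $\mathcal{L}\varphi_j(v)$ vanishes. Using the pairwise disjointness of the $\overline{S}_j$, for $v\in\overline{S}_j$ one has $\mathcal{L}\varphi(v)=\mathcal{L}\varphi_j(v)$ (all other summands vanish there), while $\mathcal{L}\varphi$ itself vanishes outside $\bigcup_j\overline{S}_j$. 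Hence
$$\|\mathcal{L}\varphi\|_2^2=\sum_j\|\mathcal{L}\varphi_j\|_2^2,$$
and combining with $\|\varphi_j\|_2\leq\lambda_j\|\mathcal{L}\varphi_j\|_2\leq\lambda\|\mathcal{L}\varphi_j\|_2$ yields the desired estimate. The only real obstacle is Part 1: one has to argue injectivity with care and handle the degenerate case in which $V$ is finite and $S=V$ (where $c\sqrt{d(\cdot)}$ does lie in $L^2(S)$ and destroys injectivity); this is exactly why the paper's focus on infinite graphs makes the statement of Part 1 cleanest. Parts 2 and 3 are essentially pointwise bookkeeping once the correct identifications of $\mathcal{L}\varphi$ on the supports are made.
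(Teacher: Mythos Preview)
The paper does not actually prove this lemma; it merely quotes the three statements from Pesenson \cite{PW1} and moves on. So there is no in-paper proof to compare against.

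That said, your proposal is correct and is essentially the argument one finds in Pesenson's original paper. Part~1 via injectivity of $\mathcal{L}\colon L^2(S)\to L^2(G)$ on a finite-dimensional domain is exactly the right abstraction, and your use of the Dirichlet-form identity $\langle\mathcal{L}\varphi,\varphi\rangle=\tfrac12\sum_{u\sim v}|g(v)-g(u)|^2$ together with connectedness is the standard route to injectivity; you have also correctly flagged the only degenerate case ($V$ finite and $S=V$), where the harmonic function $c\sqrt{d(\cdot)}$ kills injectivity. Part~2 is a clean pointwise computation and your collapse of $\mathcal{L}\varphi(v)$ to $\varphi(v)$ for $v\in S$ is exactly right. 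Part~3 is the intended decomposition argument; the crucial support observation $\operatorname{supp}\mathcal{L}\varphi_j\subseteq\overline{S}_j$ and the resulting orthogonality $\|\mathcal{L}\varphi\|_2^2=\sum_j\|\mathcal{L}\varphi_j\|_2^2$ are both correct. The only small point worth making explicit in Part~3 is that when $J$ is infinite you are using boundedness of $\mathcal{L}$ to pass it through the $L^2$-convergent sum $\varphi=\sum_j\varphi_j$, and that the conclusion is vacuous unless $\lambda=\sup_j\lambda_j<\infty$; both are harmless but deserve a sentence.
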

This leads us to the frame reconstruction theorem provided in \cite{PW1}, which claims the following:
\begin{theorem}
\label{teoPes}
    \emph{(\cite{PW1}, Theorem 3.1)}. A set of vertices $U \subset V$ is a uniqueness set for the space $P W_\omega(G)$ if and only if there exists a constant $C_\omega$ such that for any $f \in P W_\omega(G)$ the following discrete version of the Plancherel-Polya inequalities holds true:
\begin{equation}
\label{PesIneq}
    \left(\sum_{u \in U}|f(u)|^2\right)^{1 / 2} \leq\|f\|_{2} \leq C_\omega\left(\sum_{u \in U}|f(u)|^2\right)^{1 / 2}
\end{equation}
for all $f \in P W_\omega(G)$.
\end{theorem}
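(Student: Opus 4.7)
The plan is to prove the equivalence as a straightforward closed-graph / bounded-inverse exercise on the restriction map, noting that the left-hand inequality in (\ref{PesIneq}) is automatic from the counting-measure setup. The forward implication (sampling inequality $\Rightarrow$ uniqueness) is immediate: if $f,g \in PW_\omega(G)$ agree on $U$, then $(f-g)|_U \equiv 0$, and applying the right-hand half of (\ref{PesIneq}) to $f - g \in PW_\omega(G)$ yields $\|f-g\|_{L^2(G)} = 0$.

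For the reverse implication, I would introduce the restriction operator $R_U : PW_\omega(G) \longrightarrow \ell^2(U)$ defined by $R_U f := f|_U$. The chain of inequalities
\[
\|R_U f\|_{\ell^2(U)}^2 \;=\; \sum_{u \in U}|f(u)|^2 \;\leq\; \sum_{v \in V}|f(v)|^2 \;=\; \|f\|_{L^2(G)}^2
\]
simultaneously gives the left half of (\ref{PesIneq}) and shows that $R_U$ is a bounded linear operator with $\|R_U\|\leq 1$. The uniqueness set hypothesis is precisely the statement that $R_U$ is injective. What remains is to produce a constant $C_\omega$ with $\|f\|_{L^2(G)} \leq C_\omega \|R_U f\|_{\ell^2(U)}$, i.e.\ to show that $R_U$ is bounded below.

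The main (and arguably fatal) obstacle lies in this last step. The natural route is to invoke the bounded inverse theorem on $R_U : PW_\omega(G) \to R_U(PW_\omega(G))$, which requires the range to be closed in $\ell^2(U)$. If $PW_\omega(G)$ is finite-dimensional, the image $R_U(PW_\omega(G))$ is finite-dimensional, hence automatically closed, and one may take $C_\omega^{-1}$ equal to the smallest singular value of $R_U$; this is exactly the setting to which the Erratum \cite{PW2} restricts the result. In the infinite-dimensional case, however, injectivity of a bounded linear operator between Hilbert spaces does not imply closed range (classical counterexamples arise from compact operators with dense, non-closed range), and no argument in \cite{PW1} fills this gap. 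This is precisely the motivation for the later sections of the paper: the stronger structural hypothesis that $U$ is the complement of a $\lambda$-set is introduced to force a \emph{quantitative} lower bound $\|R_U f\|_{\ell^2(U)} \geq C_\omega^{-1}\|f\|_{L^2(G)}$ directly from the Poincaré inequality (\ref{lambdaset}), bypassing the abstract open mapping argument altogether.
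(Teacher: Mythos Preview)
Your analysis is correct and matches the paper's treatment. Note that the paper does not \emph{prove} Theorem~\ref{teoPes}; it merely records Pesenson's original argument from \cite{PW1} and explains why it breaks down. The paper phrases that argument as an application of the closed graph theorem to the identity map between $(PW_\omega(G),\|\cdot\|_2)$ and $(PW_\omega(G),\|\cdot\|_U)$, with the gap being that the latter space is not known to be complete. Your formulation via the restriction operator $R_U:PW_\omega(G)\to\ell^2(U)$ and the bounded inverse theorem is the same argument in different clothing: $(PW_\omega(G),\|\cdot\|_U)$ is isometric to $R_U(PW_\omega(G))\subset\ell^2(U)$, so completeness of the former is exactly closedness of the range of $R_U$. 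You have correctly located the obstruction and correctly tied it to the finite-dimensional restriction in the Erratum \cite{PW2} and to the role of the $\lambda$-set hypothesis in Theorem~\ref{belteorema}.
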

Let us make two remarks on the theorem.
\begin{enumerate}
    \item The chain of inequalities \eqref{PesIneq} is the frame chain of inequalities we are looking for: we just need to square it and then write $f(u)$ as $\left<f, \theta_u\right>$;
    \item If $U$ is a uniqueness set for $PW_\omega(G)$, then $$\|\cdot\|_U : PW_\omega(G) \ni f  \mapsto\left(\sum_{u \in U}|f(u)|^2\right)^{1/2} \in \mathbb{R}_+$$ is a norm for $PW_\omega(G)$: it is obviously a seminorm, but since $U$ is a uniqueness set, this means that if $\restr{f}{U}\equiv 0$, then $f \equiv 0$.
\end{enumerate}
Remark 2 is what is exploited in \cite{PW1} to prove Theorem \ref{teoPes}, by using the closed graph theorem applied to the identity operator between $(PW_\omega(G), \|\cdot\|_2)$ and $(PW_\omega(G), \|\cdot\|_U)$. However, this theorem only works for Banach spaces, and we do not know whether or not $(PW_\omega(G), \|\cdot\|_U)$ is complete. This is presumably the reason why the statement of Theorem \ref{teoPes} is corrected in the Erratum \cite{PW2}, by adding the crucial assumption that the space $PW_\omega(G)$ must be finite-dimensional. This assumption makes the proof trivial since all the norms defined on a finite-dimensional vector space are equivalent. However, this also makes the results of this theorem far less interesting: the most studied graphs in harmonic analysis (like homogeneous trees - see e.g. \cite{figa}- and $\mathbb{Z}^n$-lattices) all have infinite-dimensional Paley--Wiener spaces. Thus, our goal must be finding a sampling theorem that would allow for robust reconstruction in the infinite-dimensional scenario. In the next section, we are going to provide such a theorem. Before that, we will explicitly state the definition of sampling sets.

\begin{definition}
    (Sampling set). We say that the set $W \subseteq V$ is a sampling set for the Paley--Wiener space $PW_\omega(G)$ if the following chain of inequalities holds for some $c_\omega>0$:
    \begin{equation}
        \left(\sum_{v \in W}|f(v)|^2\right)^{1 / 2} \leq\|f\|_{2} \leq c_\omega\left(\sum_{v \in W}|f(v)|^2\right)^{1 / 2}.
    \end{equation}
    We also write
    $$
    \|f\|_{W} := \left(\sum_{v \in W}|f(v)|^2\right)^{1 / 2}.
    $$
\end{definition}

\section{Sampling theorems for infinite-dimensional Paley--Wiener spaces}
We start this section by proving a frame reconstruction sampling theorem which links sampling sets to $\lambda$-sets. 
\begin{theorem}
\label{belteorema}
    If $S$ is a $\lambda$-set and $W:= V\setminus S$, then
    \begin{equation}
       \|f\|_{W} \leq \|f\|_2 \leq c_\omega\|f\|_W 
    \end{equation}  
    for all $f$ in $PW_\omega(G)$ such that $\omega\lambda < 1$.
\end{theorem}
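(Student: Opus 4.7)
The plan is to decompose any $f \in PW_\omega(G)$ as $f = f_S + f_W$, where $f_S := \chi_S f$ and $f_W := \chi_W f$ are the restrictions of $f$ to $S$ and $W$ (extended by zero outside). Because $S$ and $W$ are disjoint, the two summands have disjoint supports, so
$$
\|f\|_2^2 = \|f_S\|_2^2 + \|f_W\|_2^2, \qquad \|f_W\|_2 = \|f\|_W.
$$
The left-hand inequality $\|f\|_W \leq \|f\|_2$ then follows at once from this identity.

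For the nontrivial direction, the strategy is to estimate $\|f_S\|_2$ using the $\lambda$-set hypothesis together with the band-limited nature of $f$. Since $f_S \in L^2(S)$, Definition \ref{def lset} yields $\|f_S\|_2 \leq \lambda \|\mathcal{L} f_S\|_2$. The crucial algebraic move is to write $\mathcal{L} f_S = \mathcal{L} f - \mathcal{L} f_W$; the triangle inequality then gives
$$
\|f_S\|_2 \leq \lambda\|\mathcal{L} f\|_2 + \lambda\|\mathcal{L} f_W\|_2.
$$
Now I invoke two different bounds on $\mathcal{L}$. On the band-limited piece, the spectral picture from Theorem \ref{teo spett} gives $\|\mathcal{L} f\|_2 \leq \omega\|f\|_2$, since $Uf$ is essentially supported on $[0,\omega]$. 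The function $f_W$ is in general not in $PW_\omega(G)$, so for it I use only boundedness of $\mathcal{L}$: $\|\mathcal{L} f_W\|_2 \leq \|\mathcal{L}\|_{\mathrm{op}}\|f\|_W$, with $\|\mathcal{L}\|_{\mathrm{op}} \leq 2$ by the standard estimate for the Chung normalized Laplacian. Combining,
$$
\|f_S\|_2 \leq \lambda\omega\|f\|_2 + \lambda\|\mathcal{L}\|_{\mathrm{op}}\|f\|_W.
$$

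The rest is algebra. Squaring this estimate and substituting $\|f_S\|_2^2 = \|f\|_2^2 - \|f\|_W^2$ produces, after rearrangement, a quadratic inequality in $\|f\|_2$:
$$
(1 - \lambda^2\omega^2)\|f\|_2^2 \leq 2\lambda^2\omega\|\mathcal{L}\|_{\mathrm{op}}\|f\|_2\|f\|_W + \bigl(1 + \lambda^2\|\mathcal{L}\|_{\mathrm{op}}^2\bigr)\|f\|_W^2.
$$
This is exactly where the assumption $\omega\lambda < 1$ is essential: it ensures $1 - \lambda^2\omega^2 > 0$, so the quadratic formula is applicable and yields the explicit constant
$$
c_\omega = \frac{\lambda^2\omega\|\mathcal{L}\|_{\mathrm{op}} + \sqrt{1 + \lambda^2(\|\mathcal{L}\|_{\mathrm{op}}^2 - \omega^2)}}{1 - \lambda^2\omega^2}
$$
for which $\|f\|_2 \leq c_\omega\|f\|_W$. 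I expect the main conceptual obstacle to be recognizing the right decomposition $\mathcal{L} f_S = \mathcal{L} f - \mathcal{L} f_W$: it makes the Paley–Wiener hypothesis usable even though $f_S$ itself is not band-limited, at the modest cost of paying the operator-norm bound on the residual $\mathcal{L} f_W$. The explicit form of $c_\omega$ then drops out purely from solving a one-variable quadratic, and explains why $c_\omega$ blows up as $\omega\lambda \to 1^-$.
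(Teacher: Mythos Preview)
Your proof is correct and follows essentially the same route as the paper: the decomposition $f=f_S+f_W$, the $\lambda$-set inequality applied to $f_S$, the key identity $\mathcal{L}f_S=\mathcal{L}f-\mathcal{L}f_W$, the spectral bound $\|\mathcal{L}f\|_2\le\omega\|f\|_2$, and the operator-norm bound on $\mathcal{L}f_W$ all appear in the paper's argument verbatim. The only divergence is in the closing algebra: the paper uses the triangle inequality $\|f\|_2\le\|f\|_W+\|f_S\|_2$ and solves a linear inequality to obtain the simpler constant $c_\omega=\dfrac{1+\lambda\omega_{\max}}{1-\lambda\omega}$, whereas you exploit the Pythagorean identity and solve a quadratic, yielding a messier but slightly sharper constant.
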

\begin{proof}
    The first inequality is immediate. As for the second one, we have
    $$
    \|f\|_2 = \|\restr{f}{W} + \restr{f}{S}\|_2 \leq \|\restr{f}{W}\|_2 + \|\restr{f}{S}\|_2.
    $$
    Now, obviously $\restr{f}{S}$ is a $L^2(S)$-function, and since $S$ is a $\lambda$-set, we have
    \begin{equation}
    \label{1}
       \|\restr{f}{S}\|_2 \leq \lambda \|\mathcal{L}\left(\restr{f}{S}\right)\|_2. 
    \end{equation}
    
    Since $\mathcal{L}$ is linear, we have
    $$
    \|\mathcal{L}\left(\restr{f}{S}\right)\|_2 =  \|\mathcal{L}f - \mathcal{L}\left(\restr{f}{W}\right)\|_2 \leq \|\mathcal{L}f\|_2 + \|\mathcal{L}\left(\restr{f}{W}\right)\|_2.
    $$
    Our aim now is to bound the sum above in terms of $\|f\|_2$ and $\|\restr{f}{W}\|_2$. As for $\|\mathcal{L}f\|_2$, we have:
    $$
               \|\mathcal{L}f\|_2^2 = \|U\mathcal{L}f\|_\oplus^2 = \int_{\sigma(\mathcal{L})}\|U\mathcal{L}f(\tau)\|^2_{\mathcal{H}_\tau}d\mu(\tau) = \int_{\sigma(\mathcal{L})}\|\tau Uf(\tau)\|^2_{\mathcal{H}_\tau}d\mu(\tau).
    $$
    Since $f \in PW_\omega(G)$, we have
    $$
    \begin{aligned}
     \int_{\sigma(\mathcal{L})}\|\tau Uf(\tau)\|^2_{\mathcal{H}_\tau}d\mu(\tau) &= \int_{[0, \omega]}\|\tau Uf(\tau)\|^2_{\mathcal{H}_\tau}d\mu(\tau) \\
    &\leq \omega^2\int_{[0, \omega]}\| Uf(\tau)\|^2_{\mathcal{H}_\tau}d\mu(\tau) = \omega^2\|Uf\|^2_\oplus = \omega^2\|f\|^2_2.   
    \end{aligned}
    $$
    This yields
    \begin{equation}
    \label{2}
    \|\mathcal{L}f\|_2 \leq \omega\|f\|_2.
    \end{equation}
    Now we only need to estimate $\|\mathcal{L}\left(\restr{f}{W}\right)\|_2$:
    \begin{equation}
    \label{3}
     \|\mathcal{L}\left(\restr{f}{W}\right)\|_2 \leq \|\mathcal{L}\|\|\restr{f}{W}\|_2 = \omega_\mathrm{max}\|\restr{f}{W}\|_2,
    \end{equation}
    where $\|\mathcal{L}\|$ is the norm of the Laplacian operator, and it equals $\omega_{\mathrm{max}}$.
    
    Combinig \eqref{1}, \eqref{2}, and \eqref{3} we have
    $$
    \|f\|_2 \leq \|f\|_W + \lambda\left(\omega \|f\|_2 + \omega_\mathrm{max}\|f\|_W\right).
    $$
    Hence
    $$
    \|f\|_2 \leq \frac{1 + \lambda \omega_\mathrm{max}}{1 - \lambda\omega}\|f\|_W
    $$
    and since $\lambda\omega < 1$, this concludes our proof.
\end{proof}
Observe that this theorem is a stronger version of Theorem \ref{teoremerda}: along with showing that the complement of a $\lambda$-set is a uniqueness set, it also shows that it is a sampling set.

This theorem solves our problem only partially, since it provides a proof for norm equivalence only if $W$ is the complement of a $\lambda$-set, and we do not know if the equivalence holds whenever $W$ is a uniqueness set which is not the complement of a $\lambda$-set. Indeed, we do not even know whether such uniqueness sets exist. However, the next theorem shows that every sampling set is the complement of a $\lambda$-set. This means that even if there exist uniqueness sets which are not complement of $\lambda$-sets, they are useless in terms of sampling results through frame inequalities.\\
It is also worth noting that in the continuous case it is not true that every uniqueness set is also a sampling set. Namely, for a uniqueness set $\{x_j\}_{j \in J}$ to be a sampling set, a separation condition $|x_j-x_k|\geq\delta$, $j \neq k$ should hold (see \cite{Polya}, or Remark 4 from \cite{PW1}).
\begin{theorem}
\label{iff}
    If $W$ is a sampling set for $PW_\omega(G)$, then its complement $S:= V\setminus W$ is a $\lambda$-set.
\end{theorem}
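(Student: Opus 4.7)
The plan is to exploit the sampling hypothesis $\|f\|_2 \leq c_\omega \|f\|_W$ on $PW_\omega(G)$ in combination with the spectral calculus for $\mathcal{L}$ in order to bound $\|\varphi\|_2$ by $\|\mathcal{L}\varphi\|_2$ for an arbitrary $\varphi \in L^2(S)$ (viewed as a function on $V$ that vanishes on $W$). The crucial observation is that, although $\varphi$ need not lie in $PW_\omega(G)$, its low-frequency part does, and the high-frequency part is automatically controlled by $\mathcal{L}$.

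First, I would fix $\varphi \in L^2(S) \subseteq L^2(G)$ and decompose it as $\varphi = P_\omega \varphi + (I-P_\omega)\varphi$, where $P_\omega$ is the orthogonal projection of $L^2(G)$ onto $PW_\omega(G)$. Since $P_\omega$ is, via the unitary $U$ of Theorem \ref{teo spett}, the multiplication operator by the indicator of $[0,\omega]$, it commutes with $\mathcal{L}$ and the spectrum of $\mathcal{L}$ restricted to $(I-P_\omega)L^2(G)$ is contained in $(\omega,\omega_{\mathrm{max}}]$. Running the same spectral computation used for \ref{2} in the previous theorem, but from below instead of above, yields
\begin{equation*}
\|\mathcal{L}\varphi\|_2 \geq \|(I-P_\omega)\mathcal{L}\varphi\|_2 = \|\mathcal{L}(I-P_\omega)\varphi\|_2 \geq \omega\,\|(I-P_\omega)\varphi\|_2.
\end{equation*}

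Next I would exploit the hypothesis that $\varphi$ is supported in $S$, so that $\varphi(v)=0$ for every $v \in W$. This forces $(P_\omega\varphi)(v) = -((I-P_\omega)\varphi)(v)$ on $W$, and hence
\begin{equation*}
\|P_\omega\varphi\|_W^2 = \sum_{v\in W}|(I-P_\omega)\varphi(v)|^2 \leq \|(I-P_\omega)\varphi\|_2^2.
\end{equation*}
Since $P_\omega\varphi \in PW_\omega(G)$ and $W$ is by assumption a sampling set, the upper frame inequality gives $\|P_\omega\varphi\|_2 \leq c_\omega\|P_\omega\varphi\|_W \leq c_\omega\|(I-P_\omega)\varphi\|_2$.

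Finally I would combine everything using the triangle inequality:
\begin{equation*}
\|\varphi\|_2 \leq \|P_\omega\varphi\|_2 + \|(I-P_\omega)\varphi\|_2 \leq (c_\omega+1)\,\|(I-P_\omega)\varphi\|_2 \leq \frac{c_\omega+1}{\omega}\,\|\mathcal{L}\varphi\|_2,
\end{equation*}
showing that $S$ is a $\lambda$-set with $\lambda = (c_\omega+1)/\omega$. The main delicate point will be the spectral step, where I need the fact that $P_\omega$ commutes with $\mathcal{L}$ and that the spectral support of $(I-P_\omega)\varphi$ sits strictly above $\omega$; everything else is bookkeeping with the triangle inequality. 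The only case that requires a separate comment is $\omega = 0$, which is degenerate because $PW_0(G)=\ker\mathcal{L}$ and both sides of the desired Poincaré inequality can vanish only trivially, so the statement is understood for $\omega>0$.
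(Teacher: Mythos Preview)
Your proof is correct and follows essentially the same route as the paper's own argument. The paper writes $f := \mathbb{P}_\omega\varphi$ and works with $f$ and $\varphi - f$, whereas you write $P_\omega\varphi$ and $(I-P_\omega)\varphi$; the spectral estimate $\|(I-P_\omega)\varphi\|_2 \leq \omega^{-1}\|\mathcal{L}\varphi\|_2$, the use of $\varphi|_W = 0$ to convert $\|P_\omega\varphi\|_W$ into $\|(I-P_\omega)\varphi\|_W$, the application of the sampling inequality to $P_\omega\varphi$, and the final constant $\lambda = (1+c_\omega)/\omega$ are all identical to what the paper does.
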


\begin{proof}
Consider $\varphi \in L^2(S)$ and denote by $\mathbb{P}_\omega$ the projection of $L^2(G)$ onto $PW_\omega(G)$. We define
$$
f : = \mathbb{P}_\omega\varphi.
$$
This gives us the following equalities:
$$
\|f - \varphi\|_2^2 = \|U(f - \varphi)\|_\oplus^2 = \int_{\sigma(\mathcal{L)}}\|U(f - \varphi)(\tau)\|^2_{\mathcal{H}_\tau}d\mu(\tau) = \int_{\sigma(\mathcal{L})\setminus [0, \omega]}\|U\varphi(\tau)\|^2_{\mathcal{H}_\tau}d\mu(\tau). 
$$
Since for all $\tau$ in $\sigma(\mathcal{L})\setminus[0, \omega]$ we have $\tau/\omega>1$, we get
$$
\begin{aligned}
    \int_{\sigma(\mathcal{L})\setminus [0, \omega]}\|U\varphi(\tau)\|^2_{\mathcal{H}_\tau}d\mu(\tau) &\leq  \int_{\sigma(\mathcal{L})\setminus [0, \omega]}\frac{\tau^2}{\omega^2}\|U\varphi(\tau)\|^2_{\mathcal{H}_\tau}d\mu(\tau)
    = \frac{1}{\omega^2}\int_{\sigma(\mathcal{L})\setminus [0, \omega]}\|\tau U\varphi(\tau)\|^2_{\mathcal{H}_\tau}d\mu(\tau) \\\\
    &= \frac{1}{\omega^2}\int_{\sigma(\mathcal{L})\setminus [0, \omega]}\|U\mathcal{L}\varphi(\tau)\|^2_{\mathcal{H}_\tau}d\mu(\tau) \leq \frac{1}{\omega^2}\int_{\sigma(\mathcal{L})}\|U\mathcal{L}\varphi(\tau)\|^2_{\mathcal{H}_\tau}d\mu(\tau)\\\\
    & = \frac{1}{\omega^2}\|U\mathcal{L}\varphi\|_\oplus^2 = \frac{1}{\omega^2}\|\mathcal{L}\varphi\|_2^2.
\end{aligned}
$$
This means that $$\|f - \varphi\|_2 \leq \frac{1}{\omega}\|\mathcal{L}\varphi\|_2. $$
Since $W$ is a sampling set for $PW_\omega(G)$, this entails that there exists a constant $c_\omega >0$ such that
$$
\|f\|_2 \leq c_\omega\|f\|_W, \quad f \in PW_{\omega}(G).
$$
Now, since $\mathrm{supp}\varphi \subseteq S$, we have 
$$
\|f\|_W = \|f - \varphi\|_W \leq \|f - \varphi\|_2.
$$
All in all, we get the following chain of inequalities:
$$
\begin{aligned}
    \|\varphi\|_2 &\leq \| \varphi - f\|_2 + \|f\|_2 \leq \frac{1}{\omega}\|\mathcal{L}\varphi\|_2 + c_\omega\|f\|_W \\
    &\leq \frac{1}{\omega}\|\mathcal{L}\varphi\|_2 + c_\omega\|\varphi - f\|_2 \leq \frac{1 + c_\omega}{\omega}\|\mathcal{L}\varphi\|_2.
\end{aligned}
$$
Therefore $S : = V \setminus W$ is a $\lambda$-set, with 
$$\lambda \leq \frac{1 + c_\omega}{\omega}.$$
\end{proof}
Theorem \ref{iff} allows us to consider only the complement of $\lambda$-sets in order to find useful frame-reconstruction sampling sets for a Paley--Wiener space $PW_\omega(G)$. However, this result is not enough to establish a full characterisation of such subsets, since for a fixed $\omega >0$, we have that $\lambda\omega \leq 1+ c_\omega$, while Theorem \ref{belteorema} gave us the condition $\lambda\omega<1$. In order to dig deeper into this connection and find out whether or not it would be possible to characterise sampling sets via the $\lambda$-constant of their complement, one should start by trying to characterise $c_\omega$ in terms of the geometric constraints of the graph. Some work has already been done in this sense, and some results where the sampling threshold depends on geometrical features of the graph have been published (see e.g. \cite{FP}, \cite{PP1}). However, since these results are not sharp, they do not provide a sharp $\lambda$-constant for their complements, so the issue still persists. It seems reasonable that the assumption of finite-geometry (or, in a more general setting, the boundedness of the weight function) would play a crucial role into finding the desired characterisation. This problem is however very difficult to tackle, and it is not precisely on the scope of this work, so we will leave this question open.\\
Note that the proof of Theorem \ref{iff} nowhere uses the explicit form \ref{disclap} of the discrete Laplacian; consequently, the result holds for any bounded, linear, positive operator playing its role, such as the operators in \cite{FP}.


\section{Remarks on \cite{FP}}
In this section, we will discuss \cite{FP} and compare its results with the ones provided in Section 3. We will start by giving a brief overview of its framework, which is more general. We will then highlight the differences in the presented results once they are adapted to our framework. Finally, we will give an example on $\mathbb{Z}$ of a sampling set which can be found either by our results or by the sampling theorem in \cite{FP}. We will show that with our method the sampling threshold will be higher than the one given by \cite{FP}, thus allowing for sampling on a wider range of Paley--Wiener spaces.

\subsection{Framework}
The assumptions on the graph $G$ coincide with ours, with the exception of uniformly weighted graphs. This means that the finite-degree assumption can be relaxed: the authors consider a non-negative weight function $w: V\times V\longrightarrow \mathbb{R}_+$ and they ask that $\sum_{v \in V}w(u,v) < \infty$ for all $u \in V$.\\
They also take into account graphs equipped with positive measures $\nu: V\longrightarrow (0,\infty)$, which are not necessarily the counting measure. The space of $L^2$-functions that they consider is then defined as the space of all functions $f: V \longrightarrow\mathbb{C}$ such that
$$
\|f\|^2_2 = \sum_{v \in V}|f(v)|^2\nu(v) < \infty.
$$
Given this framework, the discrete Laplacian operator $\Delta$ is defined as follows:
$$
\Delta f(v) = \sum_{u \sim v}(f(v) - f(u))w(v,u).
$$
The first essential remark we need to make is that although $\Delta$ is bounded, linear, and positive, it is not equivalent to $\mathcal{L}$ whenever the degree of $G$ is not homogeneous and the graph is not uniformly weighted, with weight $w(u,v) = 1/d(G)$ for all $(u, v)$ in $E$. So, the results found in \cite{FP} do not actually translate to those found here except for the case we just stated. Still, we can consider uniformly weighted graphs such as $\mathbb{Z}^n$-lattices or homogeneous trees, equipped with the counting measure and uniform weight $1/d(G)$. We will discuss the differences in these scenarios in the next subsection.

\subsection{Differences in homogeneous, uniformly weighted graphs}
The first argument we need to make in order to highlight the differences between our work and \cite{FP} is the methodology by which it is aimed to find sampling sets. Our aim is to focus on the interplay between $\lambda$-sets and sampling sets; particularly, we want to recover the original results presented in \cite{PW1}: this would allow us to find sampling sets by subtraction of $\lambda$-sets. We could then make use of the various techniques presented in \cite{PW1} for finding $\lambda$-sets and by that be able to recover the associated sampling sets. On the other hand, in \cite{FP} an explicit geometric construction, involving so called \say{admissible partitions} of the set of vertices, is presented. By building such partitions, one could find that their initial set $S_0 \subset V$ is actually a sampling set for Paley--Wiener spaces which obey to a geometric costraint depending on the partition. This is shown in Theorem 1.4, Corollary 1.9 and inequality (6.1) from \cite{FP}. Corollary 1.10 then shows that if we remove the sampling set $S_0$, we get that its complement $V\setminus S_0$ is a $\lambda$-set. This result would be trivial in our work since we proved that every sampling set, independently of how it is constructed, is the complement of a $\lambda$-set.\\
In the next subsection we are going to see an example that could be found both through the techniques presented in \cite{FP} and the ones presented in \cite{PW1}. We will show that, in light of Theorem \ref{belteorema}, the latter construction provides a more relaxed constraint on the range of Paley--Wiener spaces that can be sampled.

\subsection{Example: comparison on the line $\mathbb{Z}$}
We consider the line graph $\mathbb{Z}$, where every vertex is an integer. The discrete Laplacian operator's spectrum $\sigma(\mathcal{L})$ is the interval $[0,2]$. We consider the sampling set $W$ built in this way: 
$$
W : = k\mathbb{Z}\cup \left\{k\mathbb{Z}+ \{1\}\right\}, \quad k \in 2\mathbb{N}\setminus\{2\}.
$$
Now we perform the due computations to verify which Paley--Wiener spaces can be sampled on this set, starting from the ones provided by \cite{PW1}:
\begin{itemize}
    \item Lemma 5.3 from \cite{PW1} states that every finite sequence of consecutive vertices of lenght N is a $\lambda$-set with
    \begin{equation}
     \label{lambda}
     \lambda = \frac{1}{2}sin^{-2}\left(\frac{\pi}{2N + 2}\right).
    \end{equation}
    Moreover, point $3.$ of Lemma \ref{lemma}, states that if we have a collection of $\lambda$-sets such that their closures are pairwise disjoint, then their union is a $\lambda$-set where the constant $\lambda$ is the supremum among all the constants belonging to the sets making up the collection. In our case, since all constants are the same, we get that $V\setminus W$ is a $\lambda$-set, with $\lambda$ given by \ref{lambda}. By Theorem \ref{belteorema}, we get that $W$ is a viable sampling set for all $PW_\omega(\mathbb{Z)}$ spaces such that
    $$
    \omega < 2sin^2\left(\frac{\pi}{2(k-2) + 2}\right).
    $$
    \item Now we take into account the results presented in \cite{FP}. An explicit computation of the geometric threshold for the sampling set $r\mathbb{Z}$, with $r$ odd, is performed in section 7. Our sampling set is larger, so the Paley--Wiener spaces that can be sampled on $r\mathbb{Z}$ must also be sampled on $W$. By the same reasoning, it is not given for granted that there are no other Paley--Wiener spaces that might be sampled on $W$. However, the most efficient partition that could be found by \cite{FP} leads to the same calculations of the sampling threshold for $r\mathbb{Z}$, $r$ odd. In \cite{FP}, such a partition is built as follows:
    $$
    S_m = \{\pm m\} + r\mathbb{Z}. 
    $$
    This is the best way to optimize the sampling threshold, which is given by inequality (6.1) from \cite{FP} i.e.
    \begin{equation}
    \label{threshold}
      \omega < \frac{1}{2}\left(\sum_{m=1}^n \sum_{j=1}^m \frac{1}{K_{j-1}} \prod_{i=j}^{m-1} \frac{D_i}{K_i}\right)^{-1},  
    \end{equation}
    where
    $$D_j := \sup_{v \in S_j}w_{S_{j+1}}(v);$$
    $$K_j := \inf_{v \in S_{j+1}}w_{S_{j}}(v);$$
    $$w_{S_j}(v) : = \sum_{u \in S_j}w(u,v).$$
    Now we consider a partition made in the following way:
    $$
    S_j = \{ mk - j \mid m \in \mathbb{Z} \} \cup \{ mk + j + 1 \mid m \in \mathbb{Z} \}, \quad j = 0, \dots, \frac{k-2}{2}.
    $$
    It is clear that $W = S_0$. The partition is schematised in Figure \ref{f1}, for the special case $k = 6$.
    \begin{center}
        \begin{figure}
            \centering
            \includegraphics[scale=0.25]{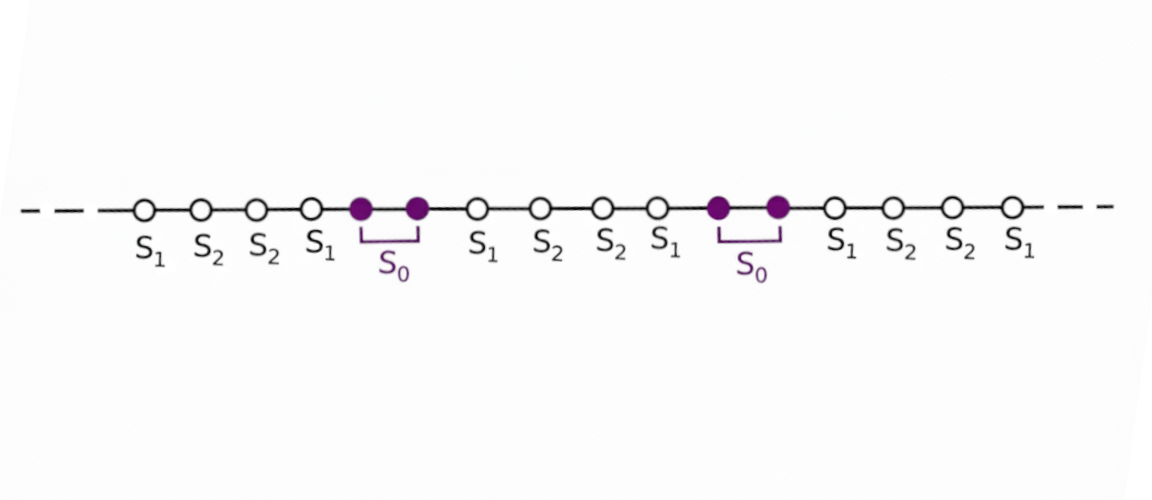}
            \caption{Admissible partition for $k=6$.}
            \label{f1}
        \end{figure}
    \end{center}
    It is straightforward to notice that this kind of partition leads to the same geometric calculations found in \cite{FP} for the sampling set $r\mathbb{Z}$, $r$ odd. The reasoning behind this choice is keeping an even number of vertices between the elements of the sampling set. Then, we can build a partition where the $D_j$'s are minimised. The $K_j$'s must be exactly $1$ for partitions on $\mathbb{Z}$ that have more than two elements, so there is no other way to improve the threshold. In the end, this choice allows to maximise the right hand side of \eqref{threshold}. The fact that a larger sampling set still yields the same geometric threshold indicates that the results provided in \cite{FP} are not always sharp (as the authors themselves recognise in the discussion of Corollary 7.2).\\
    The explicit geometric threshold is given by the first equality of (7.3) in \cite{FP}, together with the bound \eqref{threshold}, which results in
    \begin{equation}
       \omega < \frac{1}{n(n+1)}, \qquad n= \frac{k-2}{2}. 
    \end{equation}
    \item Now we just need to compare the two thresholds. A direct computation shows 
    $$
    a_n = 2sin^2\left(\frac{\pi}{4n+2}\right) - \frac{1}{n(n+1)}, \quad n\in \mathbb{N}\setminus\{0\}
    $$
    is a non-negative sequence that goes to $0$ as $n\rightarrow \infty$ (it is 0 only at $a_1 = 2sin^2\left(\frac{\pi}{6}\right) - \frac{1}{2}= 0$). It follows that our threshold is sharper for any $k \in 2\mathbb{N}\setminus\{2,4\}$ and it is the same for $k = 4$.
\end{itemize}

\section{Example: spheres on homogeneous trees}
In this section, we are going to look into some examples of sampling sets on homogeneous trees that can be found through \ref{belteorema}. In \cite{PW1}, some examples of this kind are given; however, the reconstruction theorem from \cite{PW1} was not to be considered valid since it was later corrected in \cite{PW2} and adapted to finite-dimensional Paley--Wiener spaces. In light of \ref{belteorema} we can now recover those examples and be sure of their validity. Since the examples found in \cite{PW1} are almost trivial, in the next subsection we will try to find some more notable examples.

\subsection{Framework}
We consider a homogeneous tree, i.e. a denumerable, connected graph with no cycles and homogeneous degree $d(G) = q+1$, $q \geq 2$. Again, we ask for uniformly weighted edges and we adopt the counting measure on $V$. We fix a vertex of the tree and we refer to it as its \say{root}. We then indicate with $l_m$ the sphere of radius $m$ with respect to the root, with $m \in \mathbb{N}$. We will say that a vertex belongs to $l_m$ if there are $m$ edges \say{separating} it from the root (so the root belongs to $l_0$, its neighbours to $l_1$, their neighbours to $l_2$ and so on).\\
A very deep and rich theory has been developed in \cite{figa} for this kind of trees. This theory is specialised to radial functions, i.e. $L^2$-functions such that they take the same value on $l_m$. With a slight abuse of notation, we will denote the space of radial functions on the homogeneous tree $G$ with $L^2(G)$. In the same fashion, we will use $PW_\omega(G)$ to denote spherical Paley--Wiener spaces defined as 
$$
PW_\omega(G): =  \left\{f \in L^2(G): \mathrm{supp}(Uf)\subseteq[0, \omega]\right\}.
$$
It is known from \cite{figa} that the Laplacian operator on radial functions has spectrum
\begin{equation}
\label{homspect}
    \sigma(\mathcal{L}) = [1- \rho(q), 1+ \rho(q)], \qquad \rho(q) = \frac{2 q^{1/2}}{q+1}.
\end{equation}

In the next subsection, we are going to exploit this fact together with the properties of radial functions to study sampling sets $W$ of the kind $$W = \bigcup_{m \in \mathbb{N}}l_{km}\cup l_{km+1},\quad k \in \mathbb{N}, \quad k>2.$$

\subsection{Explicit computations for $W$}
We first compute the explicit $\lambda$-constant for $S := V\setminus W$ and then explain why we chose these kind of sets. It is immediate to notice that the discrete Laplacian operator on homogeneous graphs can be rewritten in the matrix form:
\begin{equation}
    \label{hom lap}
    \mathcal{L} = I - \frac{1}{d(G)}A,
\end{equation}
where $A$ is the adjacency matrix of $G$, i.e. the symmetric matrix such that $A_{u,v}=1$ if $(u,v)$ belongs to $E$, and it is $0$ otherwise. We fix $k \in \mathbb{N}$, $k>2$, and we consider 
$$W= \bigcup_{m \in \mathbb{N}}l_{km}\cup l_{km+1}.$$
We take a fixed $m$ and we denote by $v_{km+1}$ one of the vertices of $l_{km+1}$. We then consider the $q$ sons of $v_{km+1}$ as roots of $q$-ary finite trees of depth $k-2$. This operation is schematised in Figure \ref{f2}. We denote these kind of finite sub-trees with $T_j$. We now consider a function $\varphi_j$ supported on $T_j$. It is clear that
$$
\langle \varphi_j, \mathcal{L}\varphi_j \rangle = \sum_{v \in T_j}\varphi_j(v)\overline{\mathcal{L}\varphi_j(v)} = \sum_{v \in T_j}\varphi_j(v)\overline{\mathcal{L}_j\varphi_j(v)},
$$
where $\mathcal{L}_j$ is the discrete Laplacian operator restricted to the subgraph identified by $T_j$. The last inequality holds because only vertices of $T_j$ are summed and for such vertices the two operators agree. We can rewrite $\mathcal{L}_j$ as
\begin{equation}
    \mathcal{L}_j = I_j -\frac{1}{q+1}A_j,
\end{equation}
where $I_j$ and $A_j$ are respectively the identity matrix and the adjacency matrix associated with $T_j$. This means that we can calculate the eigenvalues of $\mathcal{L}_j$ by knowing the eigenvalues of $A_j$. By Rayleigh's quotient plus the Cauchy-Schwarz inequality we obtain
$$
\eta_{\mathrm{min}}\|\varphi_j\|^2_2 \leq |\langle\varphi_j, \mathcal{L}_j\varphi_j \rangle| \leq \|\varphi_j\|_2\|\mathcal{L}\varphi_j\|_2,
$$
\begin{center}
   \begin{figure}
    \centering
    \includegraphics[scale = 0.20]{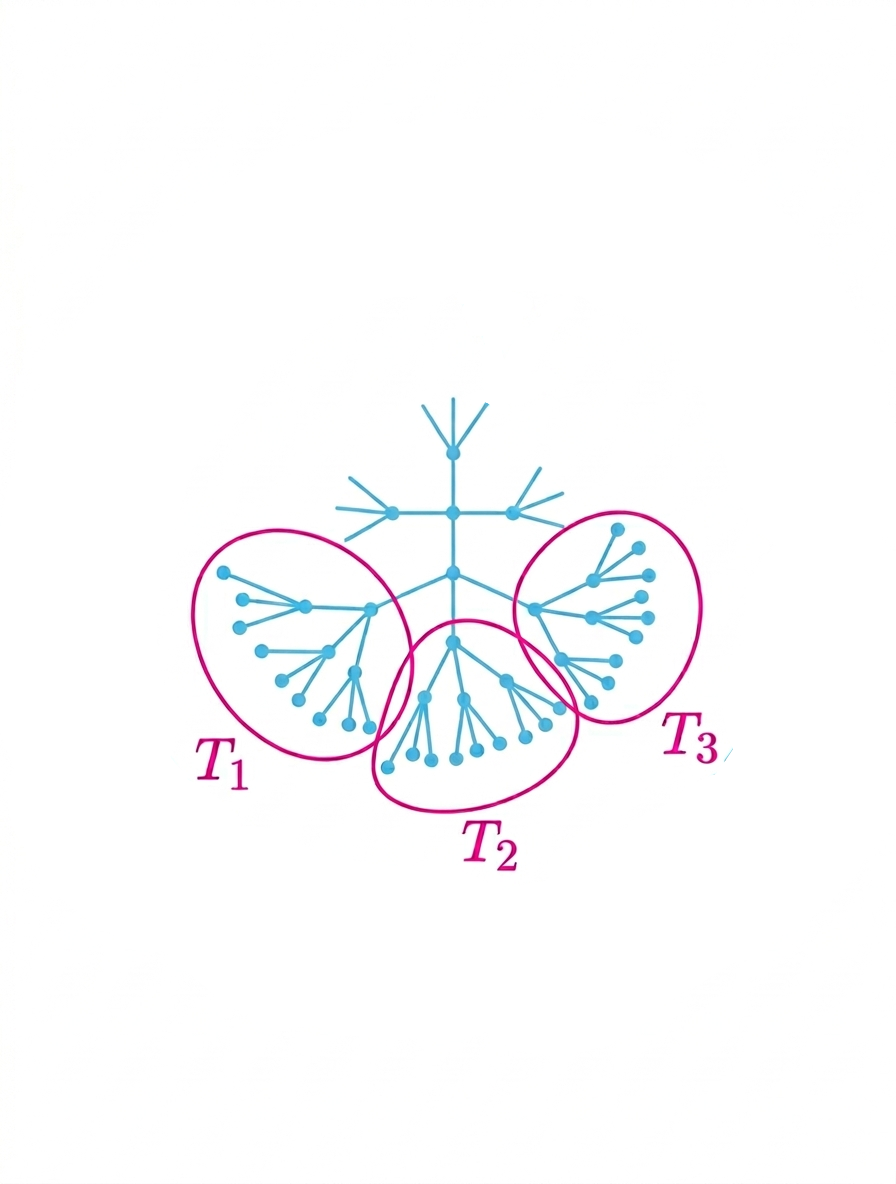} 
    \caption{Subtrees of depth $k-2 = 3$.}
    \label{f2}
\end{figure} 
\end{center}
where $\eta_{\mathrm{min}}$ is the minimal eigenvalue of $\mathcal{L}_j$. From Theorem 1 in \cite{TreeSpec}, we know that the set of the eigenvalues of the adjacency matrix of a q-ary finite tree of depth $k-2$ is
$$
\bigcup_{2 \leq m \leq k}\left\{2\sqrt{q}cos\left(\frac{l\pi}{m}\right): 1 \leq l \leq m \right\}.
$$
It is clear that the maximum eigenvalue of the adjacency matrix is $\mu_{\mathrm{max}}: = 2\sqrt{q}cos\left(\frac{\pi}{k}\right)$, so the minimal eigenvalue of $\mathcal{L}_j$ is given by
$$
\eta_{\mathrm{min}} = 1- \frac{1}{q+1}\mu_{max} = 1- \frac{2\sqrt{q}}{q+1}cos\left(\frac{\pi}{k}\right) > 0.
$$
This means that, for each $\varphi_j$ supported on $T_j$, we have
\begin{equation}
    \label{phi_est}
    \|\varphi_j\|_2 \leq \frac{1}{\eta_{\mathrm{min}}}\|\mathcal{L}\varphi_j\|_2.
\end{equation}
Now we consider $S = V\setminus W$ and we compute its $\lambda$-constant. Of course, $S= \bigcup_{j \in \mathbb{N}}T_j$. Since the $T_j$'s are disjointed, for all $\varphi$ in $L^2(S)$ we have
$$
\|\varphi\|^2_2 = \sum_{j \in \mathbb{N}}\|\varphi_j\|^2_2 \leq \sum_{j \in \mathbb{N}}\frac{1}{\eta_{\mathrm{min}}^2}\|\mathcal{L}\varphi_j\|^2_2 = \frac{1}{\eta_{\mathrm{min}}^2}\sum_{j \in \mathbb{N}}\|\mathcal{L}\varphi_j\|^2_2.
$$
Now we have to estimate $\sum_{j \in \mathbb{N}}\|\mathcal{L}\varphi_j\|^2_2$ in terms of $\|\mathcal{L}\varphi\|^2_2$. This time, since the $T_j$'s closures are not pairwise disjoint, we cannot appeal to Lemma \ref{lemma}. However, we can exploit the radial symmetry of $\varphi$: If we fix a vertex on $l_{km+1}$, and we denote it with $v_{km+1}$, we have that 
$$
\sum_{j \in \mathbb{N}}|\mathcal{L}\varphi_j(v_{km+1})|^2 = \frac{q}{(q+1)^2}|\varphi(v_{km+2})|^2 = \frac{1}{q}|\mathcal{L}\varphi(v_{km+1})|^2,
$$
moreover, for any $v$ in $V\setminus\bigcup_{m \in \mathbb{N}}l_{km+1}$, we have $$
\sum_{j \in \mathbb{N}}|\mathcal{L}\varphi_j(v)|^2=|\mathcal{L}\varphi(v)|^2.
$$
Thus $\sum_{j \in \mathbb{N}}\|\mathcal{L}_j\varphi_j\|^2_2 \leq \|\mathcal{L}\varphi\|^2_2$. In the end we get
\begin{equation}
    \|\varphi\|_2 \leq \frac{1}{\eta_{\mathrm{min}}}\|\mathcal{L}\varphi\|_2, \qquad \varphi \in L^2(S).
\end{equation}
By Theorem \ref{belteorema}, we have that $W$ is a sampling set for all radial Paley--Wiener spaces $PW_\omega(G)$ such that
$$
\omega < \eta_{\mathrm{min}}.
$$
Since the spectrum of $\mathcal{L}$ is the interval given by \eqref{homspect}, the sampling range is given by
$$
\omega \in \left[1 - \frac{2\sqrt{q}}{q+1},1- \frac{2\sqrt{q}}{q+1}cos\left(\frac{\pi}{k}\right)\right).
$$
As one might expect, as $k$ increases (i.e. the distance between elements of the sampling set increases), the sampling range gets narrower.
\subsection{Final remarks on the choice of the sampling set}
The choice of a sampling set built up by layers of two consecutive spheres was motivated by two main reasons (other than trying to make it as sparse as possible): first, since we are dealing with radial functions, it makes sense to sample on the whole sphere, since such functions take the same value at each $l_m$. Second, if we only considered the sampling set made up by separated spheres $\bigcup_{m \in \mathbb{N}}l_{km}$, $k >1$, we would not have been able to estimate $\sum_{j \in \mathbb{N}}\|\mathcal{L}_j\varphi_j\|_2$ in terms of $\|\mathcal{L}\varphi\|_2$, since for a fixed vertex on $l_{km}$ denoted with $v_{km}$, we would have had that 
$$
|\mathcal{L}\varphi_j(v_{km})| = \left|\frac{1}{q+1}\varphi_j(v_{km+1})\right|
$$ 
while 
$$
|\mathcal{L}\varphi(v_{km})|=\left|\frac{q}{q+1}\varphi_j(v_{km+1}) + \frac{1}{q+1}\varphi(v_{km-1})\right|.
$$ 
This also means that our argument is valid if we remove the sphere $l_1$ from our sampling set. 

An example for the case $k =3$ is also given in \cite{PW1}: in this case, the finite sub-trees making up $S$ are just single nodes, so we can exploit a sharper threshold given by
$$
\|\varphi\|_2 \leq \left(1 + \frac{q}{(q+1)^2}\right)^{-1/2}\|\mathcal{L}\varphi\|_2, \qquad \varphi \in L^2(l_m).
$$
By Lemma \ref{lemma}, $W$ would then be a viable sampling set for any $PW_\omega(G)$ space such that $$\omega < \left(1 + \frac{q}{(q+1)^2}\right)^{1/2},$$ which is of course a higher threshold than the one provided by our calculations. 
\section*{Acknowledgements}
This work was part of my MS Thesis at the University of Genoa. I deeply thank my supervisor, Professor Filippo De Mari, for guiding me throughout the development of this project and for giving me valuable insights during our many conversations. 

\bibliographystyle{amsplain}
\begin{bibdiv}
\begin{biblist}
\bib{Chung}{book}{
 author={Chung, Fan R. K.},
 isbn={0-8218-0315-8},
 issn={0160-7642},
 book={
 title={Spectral graph theory},
 publisher={Providence, RI: AMS, American Mathematical Society},
 },
 review={Zbl 0867.05046},
 title={Spectral graph theory},
 series={Regional Conference Series in Mathematics},
 volume={92},
 pages={xi + 207},
 date={1997},
 publisher={American Mathematical Society (AMS), Providence, RI},
}
\bib{TreeSpec}{article}{
 author={DeFord, Daryl},
 author={Rockmore, Daniel N.},
 issn={0024-3795},
 doi={10.1016/j.laa.2020.03.040},
 review={Zbl 1437.05135},
 title={On the spectrum of finite, rooted homogeneous trees},
 journal={Linear Algebra and its Applications},
 volume={598},
 pages={165--185},
 date={2020},
 publisher={Elsevier (North-Holland), New York, NY},
}
\bib{DS}{article}{
 author={Duffin, R. J.},
 author={Schaeffer, A. C.},
 issn={0002-9947},
 issn={1088-6850},
 doi={10.2307/1990760},
 review={Zbl 0049.32401},
 title={A class of nonharmonic Fourier series},
 journal={Transactions of the American Mathematical Society},
 volume={72},
 pages={341--366},
 date={1952},
 publisher={American Mathematical Society (AMS), Providence, RI},
}
 \bib{figa}{book}{
 author={Fig{\`a}-Talamanca, Alessandro},
 author={Nebbia, Claudio},
 isbn={0-521-42444-5},
 issn={0076-0552},
 book={
 title={Harmonic analysis and representation theory for groups acting on homogeneous trees},
 publisher={Cambridge etc.: Cambridge University Press},
 },
 review={Zbl 1154.22301},
 title={Harmonic analysis and representation theory for groups acting on homogeneous trees},
 series={London Mathematical Society Lecture Note Series},
 volume={162},
 pages={ix + 151},
 date={1991},
 publisher={Cambridge University Press, Cambridge. London Mathematical Society, London},
}
\bib{Folland}{book}{
 author={Folland, Gerald B.},
 isbn={978-1-4987-2713-6},
 isbn={978-1-032-92221-8},
 isbn={978-1-4987-2715-0},
 book={
 title={A course in abstract harmonic analysis},
 publisher={Boca Raton, FL: CRC Press},
 },
 doi={10.1201/b19172},
 review={Zbl 1342.43001},
 title={A course in abstract harmonic analysis},
 edition={2nd updated edition},
 series={Textbooks in Mathematics},
 pages={xiii + 305},
 date={2016},
 publisher={CRC Press, Boca Raton, FL},
}
\bib{FG}{misc}{
 author={Hartmut F{\"u}hr},
 author={Mahya Ghandehari},
 review={arXiv:2502.05691},
 title={Consistent sampling of Paley-Wiener functions on graphons},
 date={2025},
}
\bib{FP}{article}{
 author={F{\"u}hr, Hartmut},
 author={Pesenson, Isaac Z.},
 issn={0895-4801},
 issn={1095-7146},
 doi={10.1137/120873674},
 review={Zbl 1307.05094},
 title={Poincar{\'e} and Plancherel-P{\'o}lya inequalities in harmonic analysis on weighted combinatorial graphs},
 journal={SIAM Journal on Discrete Mathematics},
 volume={27},
 number={4},
 pages={2007--2028},
 date={2013},
 publisher={Society for Industrial and Applied Mathematics (SIAM), Philadelphia, PA},
}
\bib{KG}{book}{
 author={Gr{\"o}chenig, Karlheinz},
 isbn={0-8176-4022-3},
 issn={2296-5009},
 issn={2296-5017},
 book={
 title={Foundations of time-frequency analysis},
 publisher={Boston, MA: Birkh\"auser},
 },
 review={Zbl 0966.42020},
 title={Foundations of time-frequency analysis},
 series={Applied and Numerical Harmonic Analysis},
 pages={xv + 359},
 date={2001},
 publisher={Birkh{\"a}user, Cham},
}
\bib{Hall}{book}{
 author={Hall, Brian C.},
 isbn={978-1-4614-7115-8},
 isbn={978-1-4614-7116-5},
 issn={0072-5285},
 issn={2197-5612},
 book={
 title={Quantum theory for mathematicians},
 publisher={New York, NY: Springer},
 },
 doi={10.1007/978-1-4614-7116-5},
 review={Zbl 1273.81001},
 title={Quantum theory for mathematicians},
 series={Graduate Texts in Mathematics},
 volume={267},
 pages={xvi + 554},
 date={2013},
 publisher={Springer, Cham},
}
\bib{MM}{misc}{
 author={Alessandro Monguzzi},
 author={Matteo Monti},
 review={arXiv:2501.18390},
 title={On discrete holomorphic Paley-Wiener spaces and sampling on the square lattice},
 date={2025},
}
\bib{PW2}{article}{
 author={Pesenson, Isaac Z.},
 issn={0002-9947},
 issn={1088-6850},
 doi={10.1090/S0002-9947-09-04937-X},
 review={Zbl 1162.42316},
 title={Erratum to ``Sampling in Paley-Wiener spaces on combinatorial graphs''},
 journal={Transactions of the American Mathematical Society},
 volume={361},
 number={7},
 pages={3951--3951},
 date={2009},
 publisher={American Mathematical Society (AMS), Providence, RI},
}
\bib{PW1}{article}{
 author={Pesenson, Isaac Z.},
 issn={0002-9947},
 issn={1088-6850},
 doi={10.1090/S0002-9947-08-04511-X},
 review={Zbl 1165.42010},
 title={Sampling in Paley-Wiener spaces on combinatorial graphs},
 journal={Transactions of the American Mathematical Society},
 volume={360},
 number={10},
 pages={5603--5627},
 date={2008},
 publisher={American Mathematical Society (AMS), Providence, RI},
}

\bib{P}{article}{
 author={Pesenson, Isaac Z.},
 issn={0176-4276},
 issn={1432-0940},
 doi={10.1007/s00365-007-9004-9},
 review={Zbl 1180.42026},
 title={Variational splines and Paley-Wiener spaces on Combinatorial graphs},
 journal={Constructive Approximation},
 volume={29},
 number={1},
 pages={1--21},
 date={2009},
 publisher={Springer US, New York, NY},
}
\bib{PP1}{article}{
 author={Pesenson, Isaac Z.},
 author={Pesenson, Meyer Z.},
 issn={1069-5869},
 issn={1531-5851},
 doi={10.1007/s00041-021-09828-z},
 review={Zbl 1462.42057},
 title={Graph signal sampling and interpolation based on clusters and averages},
 journal={The Journal of Fourier Analysis and Applications},
 volume={27},
 number={3},
 pages={28},
 note={Id/No 39},
 date={2021},
 publisher={Springer (Birkh{\"a}user), New York, NY},
}
\bib{PP2}{article}{
 author={Pesenson, Isaac Z.},
 author={Pesenson, Meyer Z.},
 issn={1069-5869},
 issn={1531-5851},
 doi={10.1007/s00041-009-9116-7},
 review={Zbl 1218.42021},
 title={Sampling, filtering and sparse approximations on combinatorial graphs},
 journal={The Journal of Fourier Analysis and Applications},
 volume={16},
 number={6},
 pages={921--942},
 date={2010},
 publisher={Springer (Birkh{\"a}user), New York, NY},
}
\bib{Polya}{article}{
 author={Plancherel, M.},
 author={P{\'o}lya, George},
 issn={0010-2571},
 issn={1420-8946},
 doi={10.1007/BF01214286},
 review={Zbl 0018.15204},
 language={French},
 title={Fonctions enti{\`e}res et int{\'e}grales de Fourier multiples. II},
 journal={Commentarii Mathematici Helvetici},
 volume={10},
 pages={110--163},
 date={1937},
 publisher={EMS Press, Berlin},
 eprint={https://eudml.org/doc/138695},
}

\end{biblist}
\end{bibdiv}

\end{document}